\theoremstyle{plain}
\newtheorem{thm}{Theorem}
\newtheorem*{thmc}{Theorem}
\newtheorem{lem}{Lemma}
\newtheorem{prop}[lem]{Proposition}
\theoremstyle{definition}
\newtheorem{ex}[lem]{Example}
\newcommand{\R}{\mathbb{R}}
\newcommand{\Z}{\mathbb{Z}}
\newcommand{\tr}{\textup{tr }}
\newcommand{\pf}{\mathrm{pf}}
\newcommand{\Id}{\mathrm{Id}}
\newcommand{\SL}{\mathrm{SL}}
\newcommand{\midmatrix}{\textup{mid}}
\newcommand{\thup}{\textup{th}}
\begin{document}

\title{Rotundus: triangulations, Chebyshev polynomials, and Pfaffians}

\author{Charles H.\ Conley}
\address{
Charles H.\ Conley,
Department of Mathematics 
\\University of North Texas 
\\Denton TX 76203, USA} 
\email{conley@unt.edu}

\author{Valentin Ovsienko}
\address{
Valentin Ovsienko,
CNRS,
Laboratoire de Math\'ematiques 
U.F.R. Sciences Exactes et Naturelles 
Moulin de la Housse - BP 1039 
51687 REIMS cedex 2,
France}
\email{valentin.ovsienko@univ-reims.fr}



\begin{abstract}
We introduce and study a cyclically invariant polynomial which is an analog of the
classical tridiagonal determinant usually called the continuant.
We prove that this polynomial can be calculated as the Pfaffian of a skew-symmetric matrix.
We consider the corresponding Diophantine equation and prove an analog of a famous
result due to Conway and Coxeter.
We also observe that Chebyshev polynomials of the first kind
arise as Pfaffians.
\end{abstract}

\maketitle

\thispagestyle{empty}

The tridiagonal determinant 
\begin{equation}
\label{ContEq}
K_{n}(a_1,\ldots,a_n):=
\det\left(
\begin{array}{cccccc}
a_1&1&&&\\[4pt]
1&a_{2}&1&&\\[4pt]
&\ddots&\ddots&\!\!\ddots&\\[4pt]
&&1&a_{n-1}&\!\!\!\!\!1\\[4pt]
&&&\!\!\!\!\!1&\!\!\!\!a_{n}
\end{array}
\right)
\end{equation}
is most often known as the {\it continuant.\/}
It has a long and enchanting history.
Let us mention a few of its many interesting properties.

\begin{enumerate}
\item[a)]
The continuant was already known to Euler,
although the notion of determinant was not in use in his time;
see~\cite{Eul}, Chapter~18.
Indeed, continuants occur
as both the numerator and the denominator of continued fractions:
$$
a_1 - \cfrac{1}{a_2 
          - \cfrac{1}{\ddots - \cfrac{1}{a_n} } } 
          \quad=\quad
\frac{K_{n}(a_1,\ldots,a_n)}{K_{n-1}(a_2,\ldots,a_n)}.
$$
In the course of studying this formula
Euler discovered a simple algorithm for calculating continuants,
which we recall in Section~\ref{ContSec}.
He went on to prove a series of identities involving them.

\smallbreak \item[b)]
The matrix formula
\begin{equation}
\label{SLBisEq}
M_n:=\left(
\begin{array}{cc}
K_{n}(a_1,\ldots,a_n)&K_{n-1}(a_1,\ldots,a_{n-1})\\[4pt]
-K_{n-1}(a_2,\ldots,a_n)&-K_{n-2}(a_2,\ldots,a_{n-1})
\end{array}
\right)
=
\left(
\begin{array}{cc}
a_1&1\\[4pt]
-1&0
\end{array}
\right)
\cdots
\left(
\begin{array}{cc}
a_n&1\\[4pt]
-1&0
\end{array}
\right)
\end{equation}
puts continuants in the context of $\SL(2,\R)$,
and even $\SL(2,\Z)$ when the $a_i$ are integral.

\smallbreak\item[c)]
Continuants are related to the spectral theory of
difference equations.  Indeed, they can be defined
in terms of solutions of the linear difference equation
\begin{equation}
\label{DEqEq}
V_{i-1}-a_iV_{i}+V_{i+1}=0,
\end{equation}
known as the discrete Sturm-Liouville, Hill, or Schr\"odinger equation:
the initial conditions $(V_{0},V_{1})=(0,1)$ give
$V_{n+1} = K_{n}(a_1,\ldots,a_n)$.
If the sequence $(a_i)_{i\in\Z}$ is $n$-periodic, then
the matrix $M_n$ in~(\ref{SLBisEq}) is the monodromy matrix of~(\ref{DEqEq}).

\smallbreak \item[d)]
Continuants appeared in the work of Coxeter~\cite{Cox}
as the values of frieze patterns (for a survey, see~\cite{Sop}).
For $(a_i)$ $n$-periodic,
Conway and Coxeter~\cite{CoCo} considered the Diophantine system
\begin{equation}
\label{CoCotEq}
K_{n-2}(a_i,\ldots,a_{i+n-3})=1, \quad i \in \Z.
\end{equation}
(Of course, due to the periodicity there are only $n$ distinct equations.)
It can be shown that this system is equivalent to the condition that the monodromy matrix
$M_n$ of~(\ref{DEqEq}) is~$-\Id$.
Conway and Coxeter proved the beautiful theorem
that every {\it totally positive\/} $n$-periodic integer solution
$(a_i)$ of this system corresponds to a triangulation of an
$n$-gon\footnote{Conway and Coxeter called such a solution a {\it quiddity}.}.
This implies in particular that such solutions are enumerated by the Catalan numbers.
For details, see Section~\ref{TotSec}.

\smallbreak \item[e)]
As discussed in~\cite{Aig}, continuants
have another property related to the Catalan numbers.
Given any sequence $a=(a_0,a_1,a_2,\ldots)$,
there exists a unique sequence $C=(C_0,C_1,C_2,\ldots)$
determined by the condition that the {\it Hankel matrices\/}
$$
A_n :=
\left(
\begin{array}{cccc}
C_0&C_1&\cdots&C_n\\
C_1&C_2&\cdots&C_{n+1}\\
\vdots&\vdots&&\vdots\\
C_n&C_{n+1}&\cdots&C_{2n}
\end{array}
\right),
\qquad
B_n :=
\left(
\begin{array}{cccc}
C_1&C_2&\cdots&C_n\\
C_2&C_3&\cdots&C_{n+1}\\
\vdots&\vdots&&\vdots\\
C_n&C_{n+1}&\cdots&C_{2n-1}
\end{array}
\right)
$$
have determinants $\det(A_n)=1$ and $\det(B_n)=K_{n+1}(a_0,\ldots,a_n)$.
The sequence $a = (1,2,2,2,\ldots)$ has $K_{n+1}(1, 2, 2, \ldots, 2) = 1$ for all $n > 0$
and determines the Catalan numbers.
\end{enumerate}

\medbreak
Among all the wonderful properties of the continuant, there is one which might be
considered a flaw: it is not invariant under cyclic permutations of its arguments.
Indeed, the polynomials
$$
K_{n}(a_1,\ldots,a_n),\quad
K_{n}(a_n,a_1,\ldots,a_{n-1}),\quad
\ldots,\quad
K_{n}(a_2,\ldots,a_n,a_1)
$$
are all different.
At times this can be inconvenient.
For instance, in considering the Conway-Coxeter system~(\ref{CoCotEq}),
one has to deal with $n$ equations.

In this note, we introduce a cyclically invariant version of continuants.

\medbreak \noindent {\sc Comment.}
The history of the term ``continuant'' in this setting is amusing.
The polynomial~$K_n$ was baptized thus by Muir,
who had discovered it independently,
only to learn later that Sylvester and others had discovered it earlier.
Muir's choice of name was severely contested by Sylvester,
who wrote in a letter to Clifford
{\it I protest against my most expressive and suggestive word ``cumulants''
being ignored by Mr.~Muir and replaced
by the unmeaning and ill chosen word ``continuants''.\/}
Muir responded in the letter~\cite{Mui},
written in the enjoyable style that has unfortunately since been lost
in mathematical communications, that the name was chosen
{\it
 (1) because, as an exceedingly suitable and euphonious abbreviation for 
 ``continued-fraction determinant'', it seems to me to be the very word wanted,
 (2) because, in this way, it is a short literal translation of the equivalent term 
 ``Kettenbruch-Determinante'', which is the received name in Germany, 
 (3) because, though it may be somewhat scant of meaning to a literalist, 
 I cannot but consider it eminently ``suggestive'', and
 (4) because doubtless I have still a foster-father's kindly
 feeling towards the name he has known another's child by.\/}
While Sylvester responded
{\it Reasons 2 and 3 above given appear to afford
quite a sufficient justification for the use of the word in question,\/}
we might add that Reason~4 cannot be underestimated!

\section{Introducing the Rotundus}\label{IntSec}
We set
\begin{equation}
\label{RotDefEq}
R_n (a_1,\ldots,a_n):=K_n (a_1,\ldots,a_n)-
K_{n-2} (a_2,\ldots,a_{n-1}).
\end{equation}
Note that this polynomial is nothing other than the trace of the matrix~(\ref{SLBisEq}).
The first examples are
$$
\begin{array}{rcl}
R_1(a) &=& a,\\[4pt]
R_2(a_1,a_2) &=& a_1a_2-2,\\[4pt]
R_3(a_1,a_2,a_3) &=& a_1a_2a_3-a_1-a_2-a_3,\\[4pt]
R_4(a_1,a_2,a_3,a_4) &=& a_1a_2a_3a_4-a_1a_2-a_2a_3-a_3a_4-a_1a_4+2,\\[4pt]
R_5(a_1,a_2,a_3,a_4,a_5) &=& 
a_1a_2a_3a_4a_5\\
&&-a_1a_2a_3-a_2a_3a_4-a_3a_4a_5-a_1a_4a_5-a_1a_2a_5\\
&&+a_1+a_2+a_3+a_4+a_5.
\\
\end{array}
$$

\begin{prop}
\label{CyProp}
$R_n$ is cyclically invariant:
$R_n (a_1,\ldots,a_n)=R_n (a_n,a_1,\ldots,a_{n-1})$.
\end{prop}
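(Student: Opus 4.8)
The plan is to exploit the observation, already recorded just after the definition~\eqref{RotDefEq}, that $R_n$ is the trace of the matrix $M_n$ of~\eqref{SLBisEq}, combined with the cyclic invariance of the trace. First I would make the trace interpretation precise. Setting
\begin{equation*}
A_i := \begin{pmatrix} a_i & 1 \\ -1 & 0 \end{pmatrix},
\end{equation*}
formula~\eqref{SLBisEq} reads $M_n = A_1 A_2 \cdots A_n$, and reading off its two diagonal entries gives $\tr(M_n) = K_n(a_1,\ldots,a_n) - K_{n-2}(a_2,\ldots,a_{n-1}) = R_n(a_1,\ldots,a_n)$. Thus
\begin{equation*}
R_n(a_1,\ldots,a_n) = \tr(A_1 A_2 \cdots A_n).
\end{equation*}

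The crucial feature of this formula is that the assignment $a_i \mapsto A_i$ is \emph{local}: each factor $A_i$ depends only on the single variable $a_i$. Consequently, evaluating $R_n$ on the cyclically shifted sequence $(a_n, a_1, \ldots, a_{n-1})$ produces exactly the product of the corresponding factors in the corresponding order, so that by the same identity
\begin{equation*}
R_n(a_n, a_1, \ldots, a_{n-1}) = \tr(A_n A_1 \cdots A_{n-1}).
\end{equation*}

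Now I would invoke the cyclicity of the trace, $\tr(XY) = \tr(YX)$. Taking $X = A_n$ and $Y = A_1 \cdots A_{n-1}$ yields
\begin{equation*}
\tr(A_n A_1 \cdots A_{n-1}) = \tr(A_1 \cdots A_{n-1} A_n) = \tr(A_1 A_2 \cdots A_n).
\end{equation*}
Comparing the three displayed identities gives $R_n(a_n, a_1, \ldots, a_{n-1}) = R_n(a_1,\ldots,a_n)$, which is the assertion; iterating the single shift then gives invariance under the full cyclic group.

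I expect no serious obstacle: once the trace interpretation is in hand the conclusion is immediate. The only point requiring a moment's care is the bookkeeping in the second step, namely that a cyclic permutation of the \emph{arguments} of $R_n$ corresponds precisely to the same cyclic permutation of the \emph{matrix factors} $A_i$; this is exactly what the locality of $a_i \mapsto A_i$ guarantees. A purely polynomial argument by induction on $n$, using Euler's recursion for the continuants $K_n$, is also available but would be considerably more laborious and far less transparent than the trace argument, so I would not pursue it.
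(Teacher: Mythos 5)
Your proof is correct, but it takes a genuinely different route from the paper's. The paper proves cyclic invariance via the cyclic Euler algorithm of Section~\ref{ContSec}: the definition~(\ref{RotDefEq}) together with Euler's rule for computing continuants shows that $R_n$ equals the full product $a_1\cdots a_n$ with all \emph{cyclically} adjacent pairs $a_ia_{i+1}$ (indices mod $n$) replaced by $-1$ in all possible ways, an expansion that is manifestly invariant under cyclic shifts of the arguments. You instead use the observation, recorded in the paper immediately after~(\ref{RotDefEq}) but not exploited there as a proof, that $R_n(a_1,\ldots,a_n)=\tr(A_1\cdots A_n)$ with $A_i=\left(\begin{smallmatrix} a_i & 1\\ -1 & 0\end{smallmatrix}\right)$, and then invoke cyclicity of the trace; the locality point you flag (a shift of arguments is the same shift of matrix factors) is exactly the right thing to check, and your argument is complete modulo the matrix formula~(\ref{SLBisEq}), which the paper states as known. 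The trade-off: your trace argument is a one-liner resting only on~(\ref{SLBisEq}) and ties the proposition directly to the monodromy picture used later in Section~\ref{TotSec}, whereas the paper's combinatorial proof yields as a by-product the explicit monomial expansion of $R_n$ (the cyclic Euler algorithm), which the paper wants anyway and which makes the symmetry visible term by term.
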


\begin{proof}
This is an immediate consequence of Euler's algorithm, given in Section~\ref{ContSec} below.
\end{proof}

In light of this proposition, we suggest the Latin term {\it rotundus\/} as a name for $R_n$.
We will show that several properties of the rotundus
are, in fact, more sophisticated versions of analogous properties of the continuant $K_n$.
For instance, in Section~\ref{GrammSec} we calculate $R_n$ as a Pfaffian.
Speaking ``philosophically'', the relation of~$R_n$ and~$K_n$
is similar to that of the Chebyshev polynomials of the first and second kinds:
see Section~\ref{ChebSec}.

\section{The cyclic Euler algorithm}\label{ContSec}

Euler's algorithm for calculating the continuant
$K_n(a_1,\ldots,a_n)$ is as follows:
start with the full product $a_1 \ldots a_{n}$
and successively replace all the adjacent pairs $a_ia_{i+1}$ by $-1$
in all possible ways.
For example,
$$
\begin{array}{rcl}
K_3(a_1,a_2,a_3)&=&a_1a_2a_3-\cancel{a_1a_2}a_3-a_1\cancel{a_2a_3}
=a_1a_2a_3-a_1-a_3,\\[4pt]
K_4(a_1,a_2,a_3,a_4)&=&
a_1a_2a_3a_4-
\cancel{a_1a_2}a_3a_4-a_1\cancel{a_2a_3}a_4-a_1a_2\cancel{a_3a_4}
+\cancel{a_1a_2}\cancel{a_3a_4}\\
&=&
a_1a_2a_3a_4-a_1a_2-a_1a_4-a_3a_4+1.
\end{array}
$$

It follows directly from~(\ref{RotDefEq}) that
the rotundus is calculated by nearly the same rule.
The only difference is that the variables are ordered cyclically,
so the pair $a_na_1$ is considered adjacent.
For example, 
$$
\begin{array}{rcl}
R_3(a_1,a_2,a_3)&=&a_1a_2a_3-\cancel{a_1a_2}a_3-a_1\cancel{a_2a_3}
-\cancel{a_1}a_2\cancel{a_3}\\
&=&a_1a_2a_3-a_1-a_2-a_3,\\[4pt]
R_4(a_1,a_2,a_3,a_4)&=&
a_1a_2a_3a_4-
\cancel{a_1a_2}a_3a_4-a_1\cancel{a_2a_3}a_4-a_1a_2\cancel{a_3a_4}
-\cancel{a_1}a_2a_3\cancel{a_4}\\
&&+\cancel{a_1a_2}\cancel{a_3a_4}+\cancel{a_1}\cancel{a_2a_3}\cancel{a_4}\\
&=&
a_1a_2a_3a_4-a_1a_2-a_1a_4-a_2a_3-a_3a_4+2.
\end{array}
$$
At order~$5$ one has
$$\begin{array}{rcl}
R_5(a_1,a_2,a_3,a_4,a_5)&=&
a_1a_2a_3a_4a_5-
\cancel{a_1a_2}a_3a_4a_5-\cdots
-\cancel{a_1}a_2a_3a_4\cancel{a_5}\\
&&+\cancel{a_1}a_2\cancel{a_3a_4}\cancel{a_5}
+\cdots+a_1\cancel{a_2a_3}\cancel{a_4a_5}\\[4pt]
&=&
a_1a_2a_3a_4a_5
-a_1a_2a_3-a_2a_3a_4-a_3a_4a_5-a_1a_4a_5-a_1a_2a_5\\
&&+a_1+a_2+a_3+a_4+a_5.
\end{array}
$$
Clearly the second term on the right side of~(\ref{RotDefEq})
contains precisely all those terms in the modified algorithm with $a_na_1$ removed.
We refer to this procedure as the ``cyclic Euler algorithm''.

\section{Pfaffians}\label{GrammSec}

Recall that the determinant of a skew-symmetric matrix $\Omega$ is
the square of a certain polynomial in its entries, known as the {\it Pfaffian}:
$$
\det(\Omega)=:\pf(\Omega)^2.
$$
It turns out that the rotundus
is the Pfaffian of a very simple skew-symmetric matrix of size $2n\times 2n$:

\begin{thm}
\label{PfLem}
One has
\begin{equation}
\label{TheOmEq}
\det\left(
\begin{array}{cccccccccc}
&&&\;\;1&a_1&1&\\[2pt]
&&&&1&a_2&1&\\
&&&&&\ddots&\ddots&\ddots\\
&&&&&&\ddots&\ddots&1\\
-1&&&&&&&1&a_{n}\\[2pt]
-a_1&-1&&&&&&&1\\
-1&\ddots&\!\!\!\!\!\!\ddots&&&&&&\\
&\ddots&\ddots&\\
&&&\;\;-1&&&&\\[2pt]
&&-1&-a_{n}&\!\!-1&&
\end{array}
\right) = R_n(a_1,\ldots,a_n)^2.
\end{equation}
\end{thm}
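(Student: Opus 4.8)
The plan is to use that the determinant of a skew-symmetric matrix is the square of its Pfaffian, so that the claim is equivalent to $\pf(\Omega)=\pm R_n(a_1,\ldots,a_n)$, where $\Omega$ denotes the displayed $2n\times 2n$ matrix; the ambient sign is immaterial since only $R_n^2$ appears. The first step is to repackage $R_n$ combinatorially. By the cyclic Euler algorithm of Section~\ref{ContSec}, $R_n$ is obtained from the monomial $a_1\cdots a_n$ by deleting disjoint cyclically adjacent pairs $a_ia_{i+1}$ (indices mod $n$) and inserting a factor $-1$ for each deleted pair. Equivalently, $R_n=\sum_{m}(-1)^{|m|}\prod_{i\notin m}a_i$, the sum ranging over all partial matchings $m$ of the boundary cycle of the $n$-gon, with $|m|$ the number of edges of $m$ and the product taken over the vertices it leaves uncovered. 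Thus $R_n$ is precisely the signed matching polynomial of the $n$-cycle with vertex weights $a_i$ and edge weight $-1$.

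The second step is to recognize $\Omega$ as the skew-adjacency matrix of a decorated graph and to expand its Pfaffian over perfect matchings. Recall $\pf(\Omega)=\sum_{\mu}\mathrm{sgn}(\mu)\prod_{\{i,j\}\in\mu}\Omega_{ij}$, summed over perfect matchings $\mu$ of $\{1,\ldots,2n\}$, where only those $\mu$ supported on nonzero entries of $\Omega$ survive. I would read off from the band-and-corner pattern of $\Omega$ that it is the standard ``doubling'' of the $n$-cycle: each vertex $i$ is split into two vertices joined by an edge of weight $a_i$ (the entries $a_1,\ldots,a_n$ on the long diagonals), while the two copies of $i$ are joined to the copies of $i\pm1$ by edges of weight $\pm1$ (the off-diagonal and wrap-around $1$'s). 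A surviving matching $\mu$ then either uses the internal edge at $i$, contributing $a_i$ and leaving $i$ uncovered in the induced matching of the $n$-cycle, or routes both copies of $i$ to neighboring copies, which forces the corresponding cycle edge into a partial matching $m$ of the $n$-cycle. This gives a weight-preserving bijection between surviving $\mu$ and partial matchings $m$, pairing each monomial $\prod_{i\notin m}a_i$ with the appropriate power of the edge weight.

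The crux, and the step I expect to be the main obstacle, is the sign analysis: I must check that the orientation encoded by the signs of the entries of $\Omega$ is Pfaffian, i.e.\ that for every surviving $\mu$ the sign $\mathrm{sgn}(\mu)$ times the product of the $\pm1$ edge-weights equals $(-1)^{|m|}$, up to one global sign independent of $m$. I would pin this down by fixing the reference matching that uses all $n$ internal edges (it yields the top monomial $a_1\cdots a_n$ with a definite sign) and then showing that toggling $\mu$ along any single cycle edge—trading two internal edges for the connecting edges—changes the overall sign by exactly $-1$ (the product of the newly used $\pm1$ edge-weights times the change in $\mathrm{sgn}(\mu)$), in agreement with the edge weight $-1$ in $R_n$; this is the statement that each relevant transition cycle in $\Omega$'s graph is oddly oriented, which is precisely what the signs on the wrap-around corner entries enforce. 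Once every term of $\pf(\Omega)$ is matched with the corresponding term of $\pm R_n$, squaring gives $\det(\Omega)=\pf(\Omega)^2=R_n(a_1,\ldots,a_n)^2$. As a safeguard against the delicate global bookkeeping, I would be ready to replace the orientation argument by a Laplace-type expansion of $\pf(\Omega)$ along its first row: since that row carries only the entries $1,a_1,1$ together with a wrap-around $-1$, the expansion reduces $\pf(\Omega)$ to smaller Pfaffians of the same shape and produces a continuant-type recursion that can be matched directly against $K_n=a_nK_{n-1}-K_{n-2}$ and the defining relation $R_n=K_n-K_{n-2}$ of~(\ref{RotDefEq}).
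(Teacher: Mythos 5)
Your route is genuinely different from the paper's. The paper reads the matrix in~(\ref{TheOmEq}) as the block matrix $\left(\begin{smallmatrix} E & C \\ -C & E\end{smallmatrix}\right)$, where $C$ is the continuant matrix of~(\ref{ContEq}) and $E$ is skew-symmetric with a single $1$ in its upper right corner, and proves the more general identity $\det\left(\begin{smallmatrix} xE & A \\ -A & yE\end{smallmatrix}\right)=\bigl(\det(A)-xy\det(A_\midmatrix)\bigr)^2$ for an arbitrary $n\times n$ matrix $A$: the left side is a perfect square by skew-symmetry and quadratic in $x$ and $y$, so it equals $(\Delta_0+x\Delta_x+y\Delta_y+xy\Delta_{xy})^2$; setting $x=0$ or $y=0$ kills $\Delta_x$ and $\Delta_y$ and identifies $\Delta_0=\det(A)$, the $x^2y^2$ coefficient identifies $\Delta_{xy}=\pm\det(A_\midmatrix)$, and a single singularity check at $A=\Id$, $x=y=1$ fixes the sign. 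This avoids all matching combinatorics and all sign bookkeeping. Your proposal instead expands $\pf(\Omega)$ over perfect matchings of a doubled cycle and matches terms against the cyclic Euler algorithm; that is a legitimate Kasteleyn-style alternative which, if completed, would give a bijective explanation of each monomial of $R_n$ — something the paper's proof does not provide.

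As written, though, the proposal has two genuine gaps. First, the bijection between surviving perfect matchings and partial matchings of the $n$-cycle is asserted rather than proved: if vertex $i$ does not use its internal edge, its two copies could a priori be routed to \emph{different} neighbours ($i-1$ on one side, $i+1$ on the other), producing a ``shifted'' configuration that corresponds to no matching of the cycle. One must show such shifts propagate around the cycle and fail to close up — this works precisely because the off-diagonal blocks are tridiagonal without wraparound (the entries $\Omega_{n,n+1}$ and $\Omega_{1,2n}$ vanish) — but that argument is missing. Second, the sign analysis, which you yourself flag as the crux, is only sketched: checking a single toggle against the reference matching is not enough; you need every alternating cycle between two surviving matchings to be oddly oriented, and the wraparound edges sit in the diagonal blocks $E$ with a sign pattern different from the generic edges, so they require a separate verification. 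The fallback via a Pfaffian expansion along the first row is also less routine than claimed, since the corner entries break the self-similar shape and the recursion mixes cycle-type and path-type Pfaffians. None of these obstacles looks fatal, but each must actually be carried out; the paper's perfect-square argument is the far shorter path.
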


This formula may be understood as an analog of~(\ref{ContEq}).  It is entertaining to prove the cyclic symmetry of the determinant directly by conjugating by the appropriate permutation matrices.

\medbreak \noindent {\sc Example.}
One can easily check directly that
$$
\pf\left(
\begin{array}{cccccc}
0&0&1&a_1&1&0\\[4pt]
0&0&0&1&a_2&1\\[4pt]
-1&0&0&0&1&a_3\\[4pt]
-a_1&-1&0&0&0&1\\[4pt]
-1&-a_2&-1&0&0&0\\[4pt]
0&-1&-a_3&-1&0&0
\end{array}
\right)
=a_1a_2a_3-a_1-a_2-a_3.
$$

\medbreak \noindent {\sc Remark.}
Surprisingly, symmetric matrices of the same form
are also related to the rotundus:
$$
\det\left(
\begin{array}{cccccccccc}
&&&\;\;1&a_1&1&\\[2pt]
&&&&1&a_2&1&\\
&&&&&\ddots&\ddots&\ddots\\
&&&&&&\ddots&\ddots&1\\
1&&&&&&&1&a_{n}\\[2pt]
a_1&1&&&&&&&1\\
1&\ddots&\!\!\!\!\!\!\ddots&&&&&&\\
&\ddots&\ddots&\\
&&&\;\;1&&&&\\[2pt]
&&1&a_{n}&\!\!1&&
\end{array}
\right) = (-1)^n\bigl(R_n(a_1,\ldots,a_n)^2-4\bigr).
$$

\medbreak \noindent
{\it Proof of Theorem~\ref{PfLem}.\/}
Regard the matrix in~(\ref{TheOmEq}) as a
$2 \times 2$ block matrix with $n \times n$ entries.
As such, it has the form
$$
\left( \begin{matrix} \ E & C\\ -C & E \end{matrix} \right),
$$
where $C$ is the tridiagonal continuant matrix in~(\ref{ContEq}), and
$E$ is the skew-symmetric matrix with
a~$1$ in the upper right corner, a~$-1$ in the lower left corner,
and all other entries zero.

It clarifies the situation to prove a more general result.
Given any $n \times n$ matrix $A$, let us write $A_\midmatrix$
for the $(n-2) \times (n-2)$ matrix obtained from $A$
by removing its ``perimeter'': its first and last rows and columns.
We will prove that for any scalars $x$ and $y$,
\begin{equation} \label{PrfEq}
\det \left( \begin{matrix} xE & A\\ -A & yE \end{matrix} \right)
= \bigl( \det(A) - xy \det(A_\midmatrix) \bigr)^2.
\end{equation}
Taking $x$ and $y$ to be~$1$ and $A$ to be $C$ then gives the theorem.

Write $B$ for the matrix in~(\ref{PrfEq}).
Clearly $\det(B)$ is quadratic in both $x$ and $y$,
and it is a perfect square because $B$ is skew-symmetric.
Consequently it must take the form
$$
\det(B) = \bigl( \Delta_0 + x \Delta_x + y \Delta_y + xy \Delta_{xy} \bigr)^2
$$
for some polynomials $\Delta_0$, $\Delta_x$, $\Delta_y$, and $\Delta_{xy}$
in the entries of $A$, which are determined up to a single overall choice of sign.
Observe that
$$
\det(B) = \det \left[
\left( \begin{matrix} 0 & -\Id \\ \Id & \ 0 \end{matrix} \right)
\left( \begin{matrix} xE & A \\ -A & yE \end{matrix} \right)
\right] =
\det \left( \begin{matrix} A & -yE\\ xE & \ A \end{matrix} \right).
$$
Therefore if either $x$ or $y$ is zero, $\det(B) = \det(A)^2$.
Hence $\Delta_x = \Delta_y = 0$,
and we may take $\Delta_0 = \det(A)$.

Now use the following schematic diagram of $B$ to envision
the coefficient of $x^2 y^2$ in its determinant:
$$ B = 
\left( \begin{array}{ccc;{2pt/2pt}ccc}
&&\ \ x&&&\\
&&&&A&\\
-x&&&&&\\
\hdashline[2pt/2pt]
&&&&&\ \ y\\
&-A&&&&\\
&&&-y&&
\end{array} \right).
$$
It becomes clear that this coefficient is $\det(A_\midmatrix)^2$,
and so $\Delta_{xy}$ must be one of $\pm \det(A_\midmatrix)$.
The sign is negative, because $B$ is singular when
$x = y =1$ and $B = \Id$:
its first and last columns sum to~$0$.
\hfill $\Box$

\medbreak \noindent {\sc Comment.}
Theorem~\ref{PfLem} arises naturally in symplectic geometry.
Consider a ``projective $2n$-gon'' in $(2n-2)$-dimensional symplectic space,
i.e., a cyclically ordered configuration of $2n$ lines, satisfying the strong
``Lagrangian condition'' that every set of $n-1$ consecutive lines
generates a Lagrangian subspace.
It turns out that the moduli space of such configurations
is precisely the hypersurface where the rotundus vanishes.
The matrix in~(\ref{TheOmEq}) enters the picture as the Gram matrix
of the symplectic form evaluated on a certain normalized choice
of points on the lines of the configuration.

These geometric considerations are more technical and will be treated elsewhere.
In this note we restrict ourselves to combinatorial properties of the rotundus
which seem interesting and deserving of further study.

\section{Centrally symmetric triangulations}\label{TotSec}

Here we investigate the Diophantine equation
\begin{equation}
\label{DiffContEq}
R_n(a_1,\ldots,a_n)=0.
\end{equation}
We will show that it is an analog of
the Coxeter-Conway system~(\ref{CoCotEq}).
However, thanks to its cyclic invariance, one does not need a system:
a single equation contains complete information.

Let us first explain the classical Conway-Coxeter theorem~\cite{CoCo}.
An $n$-periodic solution $(a_i)_{i \in \Z}$ of the
system~(\ref{CoCotEq}) is called {\it totally positive\/} if 
\begin{equation}
\label{TotPostEq}
K_{j-i+1}(a_i,a_{i+1},\ldots,a_j)>0
\mbox{\rm\ for\ } j - i < n-3.
\end{equation}
Total positivity is one of the central notions of algebraic combinatorics.
The theorem is a beautiful combinatorial interpretation of
the totally positive solutions of~(\ref{CoCotEq}).
Given a triangulation of a (regular) $n$-gon,
let~$a_i$ be the number of triangles adjacent to the $i^\thup$ vertex.
This yields an $n$-periodic sequence of positive integers~$(a_i )_{i\in\Z}$.
The content of the theorem is that these sequences are
solutions of~(\ref{CoCotEq}), they are totally positive,
and every totally positive solution of~(\ref{CoCotEq})
arises in this way.

\begin{thmc} \cite{CoCo}
Totally positive integer solutions of~(\ref{CoCotEq}) correspond to triangulations of the $n$-gon.
\end{thmc}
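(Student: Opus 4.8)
The plan is to exhibit a bijection $\Phi$ between triangulations of the $n$-gon and totally positive integer solutions of~(\ref{CoCotEq}), where $\Phi$ sends a triangulation $T$ to its ``quiddity'' $(a_i)$, with $a_i$ the number of triangles incident to the $i^\thup$ vertex. Using the stated equivalence of~(\ref{CoCotEq}) with the condition $M_n=-\Id$ on the monodromy~(\ref{SLBisEq}), I would run the whole argument by induction on $n$, the base case $n=3$ being the unique triangulation with quiddity $(1,1,1)$, for which one checks directly that $M_3=-\Id$.

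The engine of the induction is the one-line matrix identity
\begin{equation*}
\left(\begin{matrix} a & 1\\ -1 & 0\end{matrix}\right)
\left(\begin{matrix} 1 & 1\\ -1 & 0\end{matrix}\right)
\left(\begin{matrix} b & 1\\ -1 & 0\end{matrix}\right)
=
\left(\begin{matrix} a-1 & 1\\ -1 & 0\end{matrix}\right)
\left(\begin{matrix} b-1 & 1\\ -1 & 0\end{matrix}\right).
\end{equation*}
Its combinatorial counterpart is the removal of an \emph{ear}: every triangulation with $n\ge 4$ vertices has a vertex $k$ incident to a single triangle, so $a_k=1$, and deleting this triangle produces a triangulation $T'$ of an $(n-1)$-gon whose quiddity is obtained from $(a_i)$ by deleting the entry $a_k=1$ and decreasing each of its two neighbours by~$1$. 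Since $M_n=-\Id$ is central, I may cyclically rotate the product~(\ref{SLBisEq}) so that the factors for $a_{k-1},\,1,\,a_{k+1}$ are consecutive; the identity then collapses them into the factors for $a_{k-1}-1,\,a_{k+1}-1$, giving $M_n(T)=M_{n-1}(T')$. By induction the latter is $-\Id$, so $\Phi(T)$ solves~(\ref{CoCotEq}).

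For the reverse direction I would first show that every totally positive integer solution of~(\ref{CoCotEq}) again has an entry equal to $1$. Total positivity gives $a_i=K_1(a_i)>0$, hence $a_i\ge 1$; and if all $a_i\ge 2$, the estimate $K_m(b_1,\ldots,b_m)\ge m+1$ for integers $b_j\ge 2$ (an easy induction on $K_m=b_mK_{m-1}-K_{m-2}$) yields $K_{n-2}\ge n-1>1$, contradicting~(\ref{CoCotEq}). Picking an index $k$ with $a_k=1$, I would reverse the ear move: total positivity excludes two adjacent $1$'s (for $n\ge 5$ this is exactly $K_2(a_{k-1},a_k)>0$, while $n=4$, whose only solution is $(1,2,1,2)$, is checked by hand), so both neighbours exceed $1$ and the reduced sequence is a positive integer solution of the $(n-1)$-system. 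The identity, applied to sub-arcs, identifies its partial continuants with partial continuants of $(a_i)$, so total positivity is inherited; by induction the reduced sequence is the quiddity of a triangulation of the $(n-1)$-gon, and re-attaching the ear at position $k$ gives $T$ with $\Phi(T)=(a_i)$.

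The main obstacle is assembling these two constructions into a genuine bijection. Total positivity is the first delicate point: it is not formal, and the cleanest route is to carry it through the induction, verifying via the same identity read backwards that attaching an ear keeps every partial continuant positive. The sharper difficulty is well-definedness of the inverse: a solution may contain several $1$'s, so the reduction is non-deterministic, and one must show the resulting triangulation is independent of the order in which ears are removed. I would settle this either by a confluence (local diamond) argument, showing that reductions at two distinct $1$'s commute up to relabelling, or, avoiding confluence, by proving $\Phi$ injective directly by induction and concluding from surjectivity that it is a bijection; in particular the solutions are then enumerated by the Catalan numbers.
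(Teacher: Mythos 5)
The paper does not prove this statement: it is quoted as the classical Conway--Coxeter theorem, attributed to \cite{CoCo}, with pointers to \cite{Hen,SVS} for proofs, so there is no in-paper argument to compare yours against. Judged against the literature, your proposal is essentially the standard ear-reduction proof (the route of \cite{Hen} and, in substance, of Conway and Coxeter themselves): the matrix identity you write is correct, every triangulation of an $n$-gon with $n\ge 4$ has an ear, and your argument that a totally positive solution must contain a $1$ but no two adjacent $1$'s is sound, including the separate treatment of $n=4$. Two of the points you flag as delicate deserve comment. First, the inheritance of total positivity under ear removal is fine for arcs containing both decremented entries, but for an arc \emph{ending} at a decremented entry you need the slightly finer observation that the first column of $\left(\begin{smallmatrix} a-1 & 1\\ -1 & 0\end{smallmatrix}\right)$ equals that of $\left(\begin{smallmatrix} a & 1\\ -1 & 0\end{smallmatrix}\right)\left(\begin{smallmatrix} 1 & 1\\ -1 & 0\end{smallmatrix}\right)$ (and dually for first rows), giving $K_m(\ldots,a_{k-1}-1)=K_{m+1}(\ldots,a_{k-1},1)$, which is a partial continuant of the original sequence and hence positive; ``the identity applied to sub-arcs'' alone does not cover these boundary cases. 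Second, the confluence worry is unnecessary: if $a_k=1$ then vertex $k$ is incident to exactly one triangle in \emph{any} triangulation with quiddity $(a_i)$, so that triangle must be $(k-1,k,k+1)$, and removing it reduces two triangulations with equal quiddities to two triangulations of the $(n-1)$-gon with equal quiddities; injectivity follows by induction with no diamond lemma. With those two points filled in, your outline is a complete and correct proof.
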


For different proofs of this theorem, see~\cite{Hen,SVS}.

\medbreak \noindent {\sc Example.}
Up to cyclic permutation, the only totally
positive $5$-periodic integer solution of the system
$$
\left|
\begin{array}{cccc}
a_i&1&0\\[2pt]
1&a_{i+1}&\!\!\!1\\[2pt]
0&1&\!\!a_{i+2}
\end{array}\right|=1,
\quad 1\leq i\leq 5,
$$
is given by $(a_1, a_2,a_3, a_4,a_5)=(1, 3, 1, 2, 2)$. 
It corresponds to the only triangulation of the pentagon:
$$
\xymatrix @!0 @R=0.50cm @C=0.5cm
{
&&3\ar@{-}[rrd]\ar@{-}[lld]\ar@{-}[lddd]\ar@{-}[rddd]&
\\
1\ar@{-}[rdd]&&&& 1\ar@{-}[ldd]\\
\\
&2\ar@{-}[rr]&& 2
}
$$
The label of each vertex is the number of triangles adjacent to it.

\medbreak
We now turn to the rotundus system~(\ref{DiffContEq}).
As usual, extend $(a_1, \ldots, a_n)$ to an $n$-periodic sequence $(a_i)_{i \in \Z}$.
By analogy with~(\ref{CoCotEq}), solutions of~(\ref{DiffContEq})
are said to be {\it totally positive\/}
if they satisfy~(\ref{TotPostEq}) for all $j-i \leq n$.
Such solutions are described by the following theorem.

\begin{thm}
\label{IntSolCor}
Every totally positive integer solution of~(\ref{DiffContEq}) corresponds to a
centrally symmetric triangulation of a $2n$-gon.
\end{thm}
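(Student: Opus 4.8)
The plan is to reduce Theorem~\ref{IntSolCor} to the classical Conway--Coxeter theorem by a doubling trick, exploiting the fact that $R_n$ is the trace of the monodromy matrix $M_n$ of~(\ref{SLBisEq}). Since each factor $\left(\begin{smallmatrix} a_i & 1 \\ -1 & 0\end{smallmatrix}\right)$ has determinant~$1$, we have $M_n \in \SL(2,\R)$, and Cayley--Hamilton gives $M_n^2 = \tr(M_n)\,M_n - \Id$. Hence $R_n(a_1,\ldots,a_n) = \tr(M_n) = 0$ is equivalent to $M_n^2 = -\Id$. I would then extend $(a_1,\ldots,a_n)$ to the $2n$-periodic sequence $b = (a_1,\ldots,a_n,a_1,\ldots,a_n)$, whose monodromy factors as $M_{2n}(b) = M_n(a)^2$. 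Therefore~(\ref{DiffContEq}) holds if and only if $M_{2n}(b) = -\Id$, which is exactly the Conway--Coxeter system~(\ref{CoCotEq}) for the $2n$-gon (as recalled in the introduction). This is the conceptual heart of the correspondence.

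The hard part will be matching the two notions of total positivity: Conway--Coxeter requires~(\ref{TotPostEq}) for all windows of length up to $2n-3$, whereas a totally positive rotundus solution only supplies positivity for windows of length up to $n+1$. To bridge this gap I would establish a complementation identity. Writing $W_{i,j} = T_i \cdots T_j$ for the partial product of the factors $T_k = \left(\begin{smallmatrix} b_k & 1 \\ -1 & 0\end{smallmatrix}\right)$, the relation $M_{2n}(b) = -\Id$ (together with periodicity) forces $W_{i,j} = -\,W_{j+1,\,i+2n-1}^{-1}$; reading off the $(1,1)$ and $(2,2)$ entries via~(\ref{SLBisEq}) yields
$$
K_{\ell}(b_i,\ldots,b_j) = K_{2n-\ell-2}(b_{j+2},\ldots,b_{i+2n-2}), \qquad \ell = j-i+1,
$$
so the continuant of a length-$\ell$ window equals that of the complementary length-$(2n-\ell-2)$ window. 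Direct positivity covers $\ell \le n+1$, while complementation sends any $\ell \ge n-3$ to a window of length $2n-\ell-2 \le n+1$; the two ranges together cover all $\ell \le 2n-3$. This would show that rotundus total positivity of $a$ is equivalent to Conway--Coxeter total positivity of~$b$.

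With these reductions the theorem follows quickly. A totally positive integer solution $a$ of~(\ref{DiffContEq}) produces a totally positive integer solution $b$ of the $2n$-gon system, which by the Conway--Coxeter theorem is the quiddity of a (labeled) triangulation $T$ of the $2n$-gon. The defining symmetry $b_{i+n} = b_i$ means the half-turn rotation $\rho\colon i \mapsto i+n$ carries $T$ to a triangulation with quiddity $(b_{i-n}) = (b_i) = b$; since the Conway--Coxeter correspondence is a bijection, $\rho(T) = T$, so $T$ is centrally symmetric. Conversely, a centrally symmetric triangulation has $n$-periodic quiddity $b$, and running the chain of equivalences backwards---$M_{2n}(b) = -\Id \Rightarrow M_n(a)^2 = -\Id \Rightarrow \tr M_n(a) = 0$---recovers a totally positive integer solution of~(\ref{DiffContEq}). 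I expect the complementation identity, together with the careful bookkeeping of window lengths (and the degenerate small-$n$ cases, where the two ranges may need separate checking), to be the only genuinely technical step; everything else is a direct transcription of the Conway--Coxeter machinery.
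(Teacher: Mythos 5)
Your proposal follows essentially the same route as the paper: identify $R_n=\tr M_n$, observe that $R_n=0$ together with $\det M_n=1$ forces $M_n^2=-\Id$, pass to the doubled sequence whose monodromy is $M_n^2$, and invoke the Conway--Coxeter theorem for the $2n$-gon, with central symmetry coming from the $n$-periodicity of the quiddity. The only difference is that you supply the complementation identity to match the two total-positivity conditions --- a point the paper's proof passes over in silence --- and your bookkeeping there is correct.
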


\medbreak \noindent {\sc Example.}
Consider the following centrally symmetric triangulations of the decagon:
$$
 \xymatrix @!0 @R=0.32cm @C=0.45cm
 {
&&&5\ar@{-}[lllddd]\ar@{-}[lld]\ar@{-}[dddddddd]\ar@{-}[rrd]&
\\
&1\ar@{-}[ldd]&&&& 2\ar@{-}[rdd]\\
\\
2\ar@{-}[dd]&&&&&&2\ar@{-}[dd]\ar@{-}[lllddddd]\\
\\
2\ar@{-}[rrruuuuu]\ar@{-}[rdd]&&&&&&2\ar@{-}[ldd]\ar@{-}[lllddd]\\
\\
&2\ar@{-}[rrd]\ar@{-}[rruuuuuuu]&&&& 1\ar@{-}[lld]\\
&&&5\ar@{-}[rruuuuuuu]&
}
\qquad
\xymatrix @!0 @R=0.32cm @C=0.45cm
 {
&&&4\ar@{-}[lllddd]\ar@{-}[lld]\ar@{-}[dddddddd]\ar@{-}[rrd]&
\\
&1\ar@{-}[ldd]&&&& 3\ar@{-}[rdd]\ar@{-}[rdddd]\\
\\
3\ar@{-}[dd]\ar@{-}[rdddd]&&&&&&1\ar@{-}[dd]\\
\\
1\ar@{-}[rdd]&&&&&&3\ar@{-}[ldd]\ar@{-}[lllddd]\\
\\
&3\ar@{-}[rrd]\ar@{-}[rruuuuuuu]&&&& 1\ar@{-}[lld]\\
&&&4\ar@{-}[rruuuuuuu]&
}
\qquad
\xymatrix @!0 @R=0.32cm @C=0.45cm
 {
&&&4\ar@{-}[lllddd]\ar@{-}[lld]\ar@{-}[dddddddd]\ar@{-}[rrd]&
\\
&1\ar@{-}[ldd]&&&& 2\ar@{-}[rdd]\ar@{-}[rdddd]\\
\\
4\ar@{-}[dd]\ar@{-}[rdddd]&&&&&&1\ar@{-}[dd]\\
\\
1\ar@{-}[rdd]&&&&&&4\ar@{-}[ldd]\ar@{-}[lllddd]\ar@{-}[llluuuuu]\\
\\
&2\ar@{-}[rrd]&&&& 1\ar@{-}[lld]\\
&&&4\ar@{-}[llluuuuu]&
}
$$
\begin{center}
Totally positive solutions from triangulations.
\end{center}
\medbreak \noindent
At $n=5$, one easily checks that the values
$$
(5,2,2,2,1), \qquad \qquad (4,3,1,3,1), \qquad \qquad (4,2,1,4,1),
$$
of $(a_1,a_2,a_3,a_4,a_5)$ obtained from these triangulations
are indeed totally positive solutions of~(\ref{DiffContEq}).

\medbreak \noindent {\it Proof of Theorem~\ref{IntSolCor}.\/}
We deduce the result directly from the Conway-Coxeter theorem.
Recall that~(\ref{DiffContEq}) is the {\it zero-trace} condition
for the matrix $M_n$ in~(\ref{SLBisEq}).
In light of the obvious fact that this matrix has determinant~$1$,
(\ref{DiffContEq}) is equivalent to the condition that
$M_n$ have eigenvalues $\pm i$,
or in other words, $M_n^2=-\Id$.

This implies that the ``double'' $2n$-tuple $(a_1,\ldots,a_n,a_1,\ldots,a_n)$
is a solution of the Conway-Coxeter system of order $2n-2$.
By the Conway-Coxeter theorem, this $2n$-tuple must be given by
a triangulation of a $2n$-gon.
This triangulation is clearly centrally symmetric.

To prove the converse, one needs the fact that~(\ref{CoCotEq}) implies
$$
K_{n-1}(a_i,\ldots,a_{i+n-2})=0, \qquad
K_{n}(a_i,\ldots,a_{i+n-1})=-1.
$$
Indeed, this holds because the matrices
$M_{n-1}$ and $M_n$ have determinant~$1$.
Given a centrally symmetric triangulation of a $2n$-gon,
i.e., a totally positive solution of the Conway-Coxeter system of order $2n-2$,
we have shown that $M_{2n}=M_n^2=-\Id$.
Hence the result.
\hfill $\Box$

\medbreak \noindent {\sc Remark.}
If the assumption of total positivity is dropped,
the classification of integer solutions of~(\ref{CoCotEq}) is unknown,
even if we restrict to the cases for which
the $a_i$ themselves are positive; see \cite{Cun}.
Similarly, the classification of positive integer solutions of~(\ref{DiffContEq})
with $n \ge 4$ is an open problem.
For $n=5$, the simplest positive
but not totally positive solution of~(\ref{DiffContEq})
is $(a_1,a_2,a_3,a_4,a_5)=(2,1,1,1,1)$.
It cannot be obtained from a triangulation of the $10$-gon.

\section{Chebyshev polynomials}\label{ChebSec}

The celebrated {\it Chebyshev polynomials} are sequences of
orthogonal polynomials in one variable
satisfying the recurrence
$$
P_{n+1}(x) = 2xP_n(x) - P_{n-1}(x).
$$
The two sets of ``initial conditions''
$P_0(x)=1,\;P_1(x)=x$ and $P_0(x)=1,\;P_1(x)=2x$
lead to two series of polynomials, called
the Chebyshev polynomials of the first and second kinds, respectively.
These two series are usually denoted by $T_n(x)$ and $U_n(x)$.
They start as follows:
$$
\begin{array}{ll}
T_0(x)=1, & U_0(x)=1,\\[4pt]
T_1(x)=x, & U_1(x)=2x,\\[4pt]
T_2(x)=2x^2-1, & U_2(x)=4x^2-1,\\[4pt]
T_3(x)=4x^3-3x, & U_3(x)=8x^3-4x,\\[4pt]
T_4(x)=8x^4-8x^2+1, & U_4(x)=16x^4-12x^2+1,\\[4pt]
\cdots&
\end{array}
$$

It is well known that substituting
$a_1=a_2=\cdots=a_n=2x$
into the continuant $K_n$ gives precisely the 
Chebyshev polynomials of the second kind:
$$
\textstyle
U_n\left(\frac{x}{2}\right)=K_n(x,\ldots,x).
$$
As may be seen for example in~\cite{Aig},
this determinantal expression is useful in combinatorics.
A similar expression for the Chebyshev polynomials of the first kind
appears to be missing.

Applying our results, we obtain the ``Pfaffian formula''
\begin{equation}
\label{PaffEq}
T_n\left(\frac{x}{2}\right)=
\frac{1}{2}\;R_n(x,\ldots,x)=
\frac{1}{2}\;\pf\left(
\begin{array}{cccccccccc}
&&&\;\;1&x&1&\\[2pt]
&&&&1&x&1&\\
&&&&&\ddots&\ddots&\ddots\\
&&&&&&\ddots&\ddots&1\\
-1&&&&&&&1&x\\[2pt]
-x&\!\!\!-1&&&&&&&1\\
-1&\ddots&\!\!\!\!\ddots&&&&&&\\[2pt]
&\ddots&\ddots&\\
&&&-1&&&&\\[2pt]
&&\;-1&-x&\!\!\!-1&&
\end{array}
\right),
\end{equation}
the matrix being of size $2n\times 2n$.
This is an immediate corollary of Theorem~\ref{PfLem}, together with the well-known
(and obvious) relation between the polynomials of first and second kind:
$$
\textstyle
T_n(x)=\frac{1}{2}\bigl(U_n(x)-U_{n-2}(x)\bigr).
$$
We did not find~(\ref{PaffEq}) in the literature.

Applying~(\ref{SLBisEq}) and~(\ref{RotDefEq}), we have also the ``trace formula''
\begin{equation}
\label{PafTrEq}
T_n\left(\frac{x}{2}\right)=
\frac{1}{2}\;\tr
\left(
\begin{array}{cc}
x&1\\[4pt]
-1&0
\end{array}
\right)
\left(
\begin{array}{cc}
x&1\\[4pt]
-1&0
\end{array}
\right)
\cdots
\left(
\begin{array}{cc}
x&1\\[4pt]
-1&0
\end{array}
\right).
\end{equation}

\bigbreak \noindent
{\bf Acknowledgements}.
We are grateful to Sophie Morier-Genoud, Sergei Tabachnikov, and Richard Schwartz
for enlightening discussions.
C.C.\ was partially supported by Simons Foundation Collaboration Grant~207736.

\end{document}